\def\DelX0{\Delta(X_0)}
\def\delpi0{\delta^{\pi}}
\newtheorem{Theorem}{Theorem}
\newtheorem{Lemma}{Lemma}
\newtheorem{Proposition}{Proposition}
\newcommand{\comment}[1]{}
\title{\LARGE \bf Performance Bounds for the $k$-Batch Greedy Strategy \\in Optimization Problems with Curvature}
\author{Yajing Liu, Zhenliang Zhang, Edwin K. P. Chong, and Ali Pezeshki
\thanks{This work is supported in part by NSF under award CCF-1422658, and by the CSU Information Science and Technology Center (ISTeC).}%
\thanks{Y. Liu is with the Department of Electrical and Computer Engineering, Colorado State University, Fort Collins, CO 80523, USA {\tt\small yajing.liu@ymail.com}}%
\thanks{Z. Zhang is with Qualcomm Flarion Technology, Bridgewater, NJ 08873, USA. He was with the Department of Electrical and Computer Engineering, Colorado State University, Fort Collins, CO 80523, USA {\tt\small zzl.csu@gmail.com}}
\thanks{E. K. P. Chong and A. Pezeshki are with the Department of Electrical and Computer Engineering, and the Department of Mathematics, Colorado State University, Fort Collins, CO 80523, USA {\tt\small Edwin.Chong,Ali.Pezeshki@Colostate.Edu}}%
}
\begin{document}

\maketitle
\thispagestyle{empty}
\pagestyle{empty}

\begin{abstract}
The $k$-batch greedy strategy is an approximate algorithm to solve optimization problems where the optimal solution is hard to obtain. Starting with the empty set, the $k$-batch greedy strategy adds a batch of $k$ elements to the current solution set with the largest gain in the objective function  while satisfying the constraints. In this paper, we bound the performance of the $k$-batch greedy strategy with respect to the optimal strategy by defining the total curvature $\alpha_k$. We show that when the objective function is nondecreasing and submodular, the $k$-batch greedy strategy satisfies a harmonic bound  $1/(1+\alpha_k)$ for a general matroid constraint and  an exponential bound $\left(1-(1-{\alpha}_k/{t})^t\right)/{\alpha}_k$ for a uniform matroid constraint, where $k$ divides the cardinality of the maximal set in the general matroid, $t=K/k$ is an integer, and $K$ is the rank of the uniform matroid.
We also compare the performance of the $k$-batch greedy strategy with that of the $k_1$-batch greedy strategy when $k_1$ divides $k$. Specifically, we prove that when the objective function is nondecreasing and submodular, the $k$-batch greedy strategy has better harmonic and exponential bounds in terms of the total curvature.  Finally, we illustrate our results by considering a task-assignment problem.
\end{abstract}
\section{Introduction}

A variety of combinatorial optimization problems such as generalized assignment (see, e.g., \cite{streeter2008online} and \cite{{FeigeVondrak}}), max $k$-cover (see, e.g., \cite{K-cover1998} and \cite{Feige1998}), maximum coverage location  (see, e.g., \cite{Fisher1977} and \cite{Location}), and sensor placement (see, e.g., \cite{LiC12} and \cite{SensorPlacement}) can be formulated in the following way:

\begin{align}\label{eqn:1}
\begin{array}{l}
\text{maximize} \ \    f(M) \\
\text{subject to} \ \ M\in \mathcal{I}
\end{array}
\end{align}
where $\mathcal{I}$ is a non-empty collection of subsets of a finite set $X$, and $f$ is a real-valued set function defined on the power set $2^X$ of $X$. The set function $f$ is said to be \emph{submodular} if it has the diminishing-return property \cite{Edmonds}. The pair $(X,\mathcal{I})$ is called a \emph{matroid} if the collection $\mathcal{I}$ is hereditary and has the augmentation property \cite{Tutte}. When $\mathcal{I}=\{S\subseteq \mathcal{I}: |S|\leq K\}$ for a given $K$, the pair $(X,\mathcal{I})$ is said to be a \emph{uniform matroid} of rank $K$, where $|S|$ denotes the cardinality of the set $S$. These definitions will be discussed in more detail in Section~II.

 Finding the optimal solution to problem (\ref{eqn:1}) in general is NP-hard. The $1$-batch greedy strategy provides a computationally feasible solution, which starts with the empty set, and then adds one element to the current solution set with the largest gain in the objective function while satisfying the constraints. This scheme is a special case of the \emph{$k$-batch greedy strategy} (with
$k\geq 1$), which starts with the empty set but adds to the current
solution set $k$ elements with the largest gain in the objective
function under the constraints. The performance of the $1$-batch greedy strategy in optimization problems has been extensively investigated, while the performance of the $k$-batch greedy strategy for general $k$ has received little attention, notable exceptions being Nemhauser et al. \cite{nemhauser19781} and Hausmann et al. \cite{hausmann1980}, which we will review in the following subsection.

\subsection{Review of Previous Work}

Nemhauser et al.  \cite{nemhauser19781}, \cite{nemhauser1978} proved that when $f$ is a nondecreasing submodular set function satisfying $f(\emptyset)=0$, the $1$-batch greedy strategy yields at least a $1/2$-approximation for a general matroid and a $(1-1/e)$-approximation for a uniform matroid. By introducing the total curvature $\alpha$,
Conforti and Cornu{\'e}jols   \cite{conforti1984submodular} showed that when $f$ is a nondecreasing submodular set function, the $1$-batch greedy strategy achieves at least a $1/(1+\alpha)$-approximation for a general matroid and a $(1-e^{-\alpha})/{\alpha}$-approximation for a uniform matroid, where the total curvature $\alpha$ is defined as
$$\alpha=\max\limits_{j\in X^*}\left\{1-\frac{f(X)-f(X\setminus\{j\})}{f(\{j\})-f(\emptyset)}\right\}$$ and $X^*=\{j\in X: f(\{j\})>0\}$.
For a nondecreasing submodular set function $f$,  the total curvature $\alpha$ takes values on the interval $ [0,1]$. In this case, we have $1/(1+\alpha)\geq1/2$ and $(1-e^{-\alpha})/\alpha\geq (1-1/e)$, which implies the bounds $1/(1+\alpha)$ and $(1-e^{-\alpha})/\alpha$ are stronger than
the bounds $1/2$ and $(1-1/e)$ in \cite{nemhauser1978} and \cite{nemhauser19781}, respectively. Vondr{\'a}k \cite{vondrak2010submodularity} proved that when $f$ is a nondecreasing submodular set function, the continuous $1$-batch greedy strategy gives at least a $(1-e^{-\alpha})/\alpha$-approximation for any matroid.
 
 Nemhauser et al. \cite{nemhauser19781} proved that when $(X,\mathcal{I})$ is a uniform matroid and $K=ks-p$ ($s$ and $p$ are integers and $0\leq p\leq k-1$), the $k$-batch greedy strategy achieves at least a $(1-(1-\lambda/s)(1-1/s)^{s-1})$-approximation, where  $\lambda=1-p/k$. Hausmann et al. \cite{hausmann1980} showed that when $(X,\mathcal{I})$ is an independence system, then the $k$-batch greedy strategy achieves at least a $q(X,\mathcal{I})$-approximation, where $q(X,\mathcal{I})$ is the rank quotient defined in \cite{hausmann1980}. 
 Although Nemhauser et al. \cite{nemhauser19781} and Hausmann et al. \cite{hausmann1980} investigated the performance of the $k$-batch  greedy strategy, they only considered uniform matroid constraints and independence system constraints, respectively. 
This prompts us to investigate the performance of the $k$-batch greedy strategy more comprehensively.

\subsection{Main Results and Contribution}

In this paper, by defining the total curvature $\alpha_k$ of the objective function, we derive bounds for the performance of the $k$-batch  greedy strategy for a general matroid and a uniform matroid, respectively. 
By comparing the values of $\alpha_k$ for different $k$ and investigating the monotoneity of the bounds, we can compare the performance for different $k$-batch greedy strategies.

The remainder of the paper is organized as follows. In Section~II, we  review the harmonic  and exponential bounds in terms of the total curvature $\alpha$  from \cite{conforti1984submodular}
for a general matroid and a uniform matroid, respectively. In Section~III, we introduce the total curvature $\alpha_k$,
and prove that when $f$ is a nondecreasing submodular set function, the $k$-batch greedy strategy achieves a $1/(1+\alpha_k)$-approximation for a general matroid constraint and a $\left(1-(1-{\alpha}_k/{t})^t\right)/{\alpha}_k$-approximation  for a uniform matroid constraint, where $k$ divides the cardinality of the maximal set in the general matroid, $t=K/k$ is an integer, and $K$ is the rank of the uniform matroid. We also prove that $\alpha_{k}\leq \alpha_{k_1}$ when $f$ is a nondecreasing submodular set function and $k_1$ divides $k$, which implies that the $k$-batch greedy strategy provides  tighter harmonic and exponential bounds compared to the $k_1$-batch greedy strategy.   In Section~IV, we present an application to demonstrate our conclusions. In Section~V, we provide a summary of our work and main contribution.


\section{Preliminaries}\label{sc:II}

In this section, we first introduce some definitions related to sets and curvature. We then review the harmonic  and exponential bounds in terms of the total curvature $\alpha$ from \cite{conforti1984submodular}.
\subsection{Sets and Curvature}

Let $X$ be a finite set, and $\mathcal{I}$ be a non-empty collection of subsets of  $X$. The pair $(X,\mathcal{I})$ is called a \emph{matroid} if 
\begin{itemize}
\item  [i.] For all $B\in\mathcal{I}$, any set $A\subseteq B$ is also in $\mathcal{I}$.
\item  [ii.] For any $A,B\in \mathcal{I}$, if the cardinality of $B$ is greater than that of $A$, then there exists $j\in B\setminus A$ such that $A\cup\{j\}\in\mathcal{I}$.
\end{itemize}

The collection $\mathcal{I}$ is said to be \emph{hereditary} and has the \emph{augmentation} property if it satisfies properties~i and ii,  respectively. The pair $(X,\mathcal{I})$ is called a \emph{uniform matroid} when $\mathcal{I}=\{S\subseteq \mathcal{I}: |S|\leq K\}$ for a given $K$, called the \emph{rank}.

Let $2^X$ denote the power set of $X$, and define the set function $f$: $2^X\rightarrow \mathbb{R^+}$.
The set function $f$ is said to be \emph{nondecreasing} and \emph{submodular} if it satisfies properties~1 and 2 below, respectively:
\begin{itemize}
\item [1.] For any $A\subseteq B\subseteq X$, $f(A)\leq f(B)$.
\item [2.] For any $A\subseteq B\subseteq X$ and $j\in X\setminus B$, $f(A\cup\{j\})-f(A)\geq f(B\cup\{j\})-f(B)$.
\end{itemize}

Property~2 means that the additional value accruing from an extra action decreases as the size of the input set increases, and is also called the \emph{diminishing-return} property in economics.
Property~2 implies that for any $A\subseteq B\subseteq X$ and $T\subseteq X\setminus B$, 
\begin{equation}
\label{eqn:submodularimplies}
f(A\cup T)-f(A)\geq f(B\cup T)-f(B).
\end{equation}
For convenience, we denote the incremental value of adding set $T$ to the set $A\subseteq X$ as $\varrho_T(A)=f(A\cup T)-f(A)$ (following the notation of \cite{conforti1984submodular}).

The \emph{total curvature} of a set function $f$ is defined as \cite{conforti1984submodular}
$$\alpha=\max_{j\in X^*}\left\{1-\frac{\varrho_j({X\setminus\{j\}})}{\varrho_j(\emptyset)}\right\}$$
where $X^*=\{j\in X: \varrho_j(\emptyset)>0\}$. Note that $0\leq \alpha\leq 1$ when $f$ is nondecreasing and submodular, and $\alpha=0$ if and only if $f$ is additive, i.e., $f(X)=f(X\setminus\{j\})+f(\{j\})$ for all $j\in X^*$.

\subsection{Harmonic and Exponential Bounds in Terms of the Total Curvature}
In this section, we review the theorems from \cite{conforti1984submodular} bounding the performance of the $1$-batch greedy strategy using the total curvature $\alpha$ for  general matroid constraints and  uniform matroid constraints.
\begin{Theorem}
\label{Theorem2.1}
Assume that $(X,\mathcal{I})$ is a matroid and $f$ is a nondecreasing submodular set function with $f(\emptyset)=0$ and total curvature $\alpha$. Then the $1$-batch greedy solution $G$ satisfies
$$f(G)\geq \frac{1}{1+\alpha}f(O),$$
where $O$ is the optimal solution of problem (\ref{eqn:1}).
\end{Theorem}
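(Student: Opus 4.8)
The plan is to compare the greedy basis $G=\{g_1,\dots,g_K\}$, written in the order the algorithm selects its elements, against the optimal set $O$, by combining three ingredients: the matroid exchange property, the greedy selection rule, and the curvature definition. Writing $G_i=\{g_1,\dots,g_i\}$ with $G_0=\emptyset$, the assumption $f(\emptyset)=0$ gives the telescoping identity $f(G)=\sum_{i=1}^{K}\varrho_{g_i}(G_{i-1})$. Since $f$ is nondecreasing and $\mathcal{I}$ is hereditary with the augmentation property, an optimal solution may be extended to a basis without decreasing $f$, so I would take both $O$ and $G$ to be bases; all bases of a matroid are equicardinal, hence $|O|=|G|=K$.

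The first real step is to set up a matching between the greedy steps and the optimal elements. By repeated use of the augmentation property I would produce an ordering $O=\{o_1,\dots,o_K\}$ such that $G_{i-1}\cup\{o_i\}\in\mathcal{I}$ for every $i$; in words, at the moment greedy selected $g_i$ the element $o_i$ was itself a feasible candidate. Because greedy chose the feasible element of largest incremental value, this yields the fundamental per-step inequality $\varrho_{g_i}(G_{i-1})\ge\varrho_{o_i}(G_{i-1})$. Establishing this prefix-compatible bijection (a Hall-type matching on the greedy filtration) is the structural heart of the general matroid case, and is the part I would verify most carefully.

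Next I would convert these per-step inequalities into a bound on $f(O)$. Writing $f(O)=\sum_{i=1}^{K}\varrho_{o_i}(O_{i-1})$ with $O_{i-1}=\{o_1,\dots,o_{i-1}\}$, I would compare each $\varrho_{o_i}(O_{i-1})$ with $\varrho_{o_i}(G_{i-1})$ and then with $\varrho_{g_i}(G_{i-1})$, summing against $\sum_i\varrho_{o_i}(G_{i-1})\le\sum_i\varrho_{g_i}(G_{i-1})=f(G)$. This is precisely where the curvature enters: by submodularity every marginal $\varrho_{o_i}(\cdot)$ with conditioning set inside $X\setminus\{o_i\}$ lies in the band $\big[(1-\alpha)\varrho_{o_i}(\emptyset),\,\varrho_{o_i}(\emptyset)\big]$, the lower bound being exactly the definition of $\alpha$ applied at $o_i$. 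The aim is to package these sandwich bounds so that the excess $f(O)-f(G)$ is charged against $\alpha f(G)$, yielding $f(O)\le(1+\alpha)f(G)$, which rearranges to the stated bound.

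The main obstacle, and the step I would isolate first, is extracting the \emph{sharp} harmonic factor $1/(1+\alpha)$ rather than a weaker constant. The purely submodular matching argument (the case $\alpha=1$) delivers only $1/2$, while a crude use of the curvature band, replacing each marginal by its standalone value $\varrho_{o_i}(\emptyset)$, degrades to the looser guarantee $1-\alpha$; moreover any detour through $f(O)\le f(O\cup G)$ is too lossy, since it already fails in the additive case $\alpha=0$, where greedy is exactly optimal. The delicate point is thus to keep the comparison element-by-element along the matched ordering and to spend the curvature only on the discrepancy between the two conditionings $O_{i-1}$ and $G_{i-1}$, so that the additive case remains tight ($\varrho_{o_i}(O_{i-1})=\varrho_{o_i}(G_{i-1})$, giving $f(O)=f(G)$) while the curvature contributes exactly the $\alpha f(G)$ of slack needed for the $1+\alpha$ factor.
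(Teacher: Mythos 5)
Your structural half is right, and it matches the paper: the prefix-compatible ordering of $O$ with $G_{i-1}\cup\{o_i\}\in\mathcal{I}$, hence $\varrho_{o_i}(G_{i-1})\le\varrho_{g_i}(G_{i-1})$, is exactly the $k=1$ case of Lemma~\ref{lemma1}. The gap is in the analytic half: the step you describe as ``packaging the sandwich bounds'' is precisely the part that needs proof, and the plan you give for it cannot be carried out. You want to compare $\varrho_{o_i}(O_{i-1})$ with $\varrho_{o_i}(G_{i-1})$ term by term, but $O_{i-1}$ and $G_{i-1}$ are not nested, so submodularity yields no inequality between these two marginals; the only bridge you offer is the curvature band through $\varrho_{o_i}(\emptyset)$, which (as you note yourself) produces only $f(G)\ge(1-\alpha)f(O)$. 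Saying that curvature should be spent ``only on the discrepancy between the two conditionings'' names the target; no mechanism in the proposal achieves it, so what your ingredients actually deliver are just the $1/2$ and $1-\alpha$ guarantees you set out to beat.

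The gap is compounded by your dismissal of the one route that works. The proof (Conforti--Cornu\'ejols; in this paper, Propositions~\ref{Pro1} and~\ref{Pro2} and the proof of Theorem~\ref{Theorem3.3}, specialized to $k=1$) does pass through $f(O\cup G)$ --- but never by bare monotonicity. Curvature is applied to the difference
\begin{align*}
f(O\cup G)-f(O)&=\sum_{i:\,g_i\in G\setminus O}\varrho_{g_i}(O\cup G_{i-1})\\
&\ge(1-\alpha)\sum_{i:\,g_i\in G\setminus O}\varrho_{g_i}(G_{i-1}),
\end{align*}
using $\varrho_{g_i}(O\cup G_{i-1})\ge\varrho_{g_i}(X\setminus\{g_i\})\ge(1-\alpha)\varrho_{g_i}(\emptyset)\ge(1-\alpha)\varrho_{g_i}(G_{i-1})$, while Proposition~\ref{Pro1} gives $f(O\cup G)\le f(G)+\sum_{o\in O\setminus G}\varrho_{o}(G)$. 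Subtracting the first bound from the second and then invoking your matching inequality $\varrho_{o_i}(G)\le\varrho_{o_i}(G_{i-1})\le\varrho_{g_i}(G_{i-1})$ yields $f(O)\le f(G)+\alpha\sum_{i:\,g_i\in G\setminus O}\varrho_{g_i}(G_{i-1})\le(1+\alpha)f(G)$. Two details you omit are essential here. First, the matching must fix common elements ($o_i=g_i$ whenever $g_i\in O\cap G$, the ``furthermore'' clause of Lemma~\ref{lemma1}), so that $\{i:o_i\in O\setminus G\}=\{i:g_i\in G\setminus O\}$ and the increments of $O\cap G$ are not counted twice. Second, your objection that the union detour ``already fails in the additive case'' applies only to monotonicity used without the correction term: when $\alpha=0$ the subtracted sum exactly absorbs the matched bound on $\sum_{o\in O\setminus G}\varrho_o(G)$, and the chain returns the tight conclusion $f(O)\le f(G)$.
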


 When $f$ is a nondecreasing submodular set function, we have $\alpha\in[0,1]$, so $1/(1+\alpha)\in[1/2,1]$. Theorem \ref{Theorem2.1} applies to any matroid, which means the bound ${1}/(1+\alpha)$ holds for a uniform matroid  too. Theorem \ref{Theorem2.2} will present a tighter bound when $(X,\mathcal{I})$ is a uniform matroid.

\begin{Theorem}
\label{Theorem2.2}
 Assume that $(X,\mathcal{I})$ is a uniform matroid and $f$ is a nondecreasing submodular set function with $f(\emptyset)=0$ and total curvature $\alpha$. Then the $1$-batch greedy solution $G_K$ satisfies
\begin{align*}
f(G_K)&\geq\frac{1}{\alpha}\left(1-(1-{\alpha}/{K})^K\right)f(O_K)\\
&\geq \frac{1}{\alpha}(1-e^{-\alpha})f(O_K).
\end{align*}
\end{Theorem}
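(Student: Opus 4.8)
The plan is to analyze the greedy chain $\emptyset=G_0\subset G_1\subset\cdots\subset G_K$, where $G_{i}=G_{i-1}\cup\{g_i\}$ and $g_i$ maximizes the marginal gain $\delta_i:=\varrho_{g_i}(G_{i-1})$ over all feasible additions. Two structural facts drive everything. First, since $(X,\mathcal I)$ is a uniform matroid of rank $K$, every $j\notin G_i$ is a feasible addition whenever $|G_i|<K$; in particular each element of the optimal set $O_K$ is always an eligible candidate, so greedy optimality gives $\delta_{i+1}\ge\varrho_o(G_i)$ for every $o\in O_K\setminus G_i$. Second, submodularity in the form of (\ref{eqn:submodularimplies}) makes the gains non-increasing, $\delta_1\ge\delta_2\ge\cdots\ge\delta_K$, since the maximizing marginal can only shrink as the conditioning set grows.

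First I would establish the basic exchange inequality. Using monotonicity and then telescoping the increment $\varrho_{O_K\setminus G_i}(G_i)$ one optimal element at a time, submodularity bounds each term by $\varrho_o(G_i)\le\delta_{i+1}$, and there are at most $K$ such terms; hence $f(O_K)\le f(G_i\cup O_K)=f(G_i)+\varrho_{O_K\setminus G_i}(G_i)\le f(G_i)+K\delta_{i+1}$ for every $i$. On its own this inequality yields only the curvature-free estimate $f(G_K)\ge\bigl(1-(1-1/K)^K\bigr)f(O_K)$, so the curvature must be injected to sharpen the geometric rate from $(1-1/K)$ to $(1-\alpha/K)$.

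The key additional ingredient is the curvature bound $\varrho_j(S)\ge(1-\alpha)\varrho_j(\emptyset)$, valid for every $S$ and $j\notin S$, which follows from the definition of $\alpha$ together with submodularity via $\varrho_j(S)\ge\varrho_j(X\setminus\{j\})\ge(1-\alpha)\varrho_j(\emptyset)$. The aim is to combine this lower bound on the marginals of the optimal elements with the exchange inequalities across all steps and with the monotonicity of $\{\delta_i\}$, so as to obtain a recursion of the form $f(G_{i+1})\ge(1-\alpha/K)f(G_i)+\tfrac1K f(O_K)$; equivalently, that the potential $f(O_K)-\alpha f(G_i)$ contracts by the factor $1-\alpha/K$. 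Solving this recursion from $f(G_0)=0$ telescopes to $f(G_K)\ge\tfrac1\alpha\bigl(1-(1-\alpha/K)^K\bigr)f(O_K)$, which is the first claimed bound.

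I expect the curvature step to be the main obstacle. The difficulty is that a naive attempt to replace the coefficient of $f(G_i)$ by $\alpha$ in a single exchange inequality is too strong to hold term by term, because the greedy gains can be very unevenly distributed; the $(1-\alpha/K)$ rate only emerges once the curvature-controlled marginals are tracked along the entire greedy order and the exchange inequalities are aggregated against the monotone gain sequence. Isolating the portion of $f(G_i)$ that the optimal elements genuinely duplicate, which is exactly what the curvature $\alpha$ measures, is where the real work lies. Once the product bound is in hand, the second inequality is immediate from the elementary estimate $(1-\alpha/K)^K\le e^{-\alpha}$, giving $f(G_K)\ge\tfrac1\alpha(1-e^{-\alpha})f(O_K)$.
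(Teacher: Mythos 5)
Your scaffolding is correct, but the proof has a genuine gap at exactly the point where the curvature enters. (For reference: the paper never proves this theorem directly — it quotes it from Conforti and Cornu\'ejols — but its proof of the $k$-batch generalization, Theorem~\ref{Theorem3.4}, specializes to this statement with $k=1$, $t=K$, so that is the right comparison.) Everything you actually establish is sound and mirrors that proof: the exchange inequality $f(O_K)\leq f(O_K\cup G_i)\leq f(G_i)+K\delta_{i+1}$ is Proposition~\ref{Pro1} plus greediness, and the recursion $f(G_{i+1})\geq \tfrac{1}{K}f(O_K)+(1-\alpha/K)f(G_i)$, once available, telescopes from $f(G_0)=0$ to the claimed bound. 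But you never derive that recursion. You state it as ``the aim,'' observe that naive term-by-term substitutions fail, and conclude that isolating the part of $f(G_i)$ duplicated by $O_K$ ``is where the real work lies.'' That step is the entire curvature content of the theorem; without it the proposal only re-derives the curvature-free $1-(1-1/K)^K$ bound and is a plan, not a proof.

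Concretely, what is missing is the inequality $f(O_K\cup G_i)\geq f(O_K)+(1-\alpha)f(G_i)$, which combined with your exchange inequality yields the recursion. The paper packages this as the curvature $\hat{\alpha}_k$ taken \emph{with respect to the optimal solution}, $\hat{\alpha}_k=\max_j\{1-\varrho_{S^j}(O_K)/\varrho_{S^j}(\emptyset)\}$, so that $f(O_K)+(1-\hat{\alpha}_k)f(S^j)\leq f(O_K\cup S^j)$ holds by definition; it then runs your recursion with $\hat{\alpha}_k$ and finishes using $\hat{\alpha}_k\leq\alpha_k$ together with monotonicity of $h(x,t)=\bigl(1-(1-x/t)^t\bigr)/x$ in $x$. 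To supply the missing step yourself you would decompose $\varrho_{G_i}(O_K)=\sum_{l\leq i}\varrho_{g_l}(O_K\cup G_{l-1})$ along the greedy order and apply curvature to the \emph{greedy} elements' marginals on top of $O_K$, namely $\varrho_{g_l}(O_K\cup G_{l-1})\geq\varrho_{g_l}(X\setminus\{g_l\})\geq(1-\alpha)\varrho_{g_l}(\emptyset)\geq(1-\alpha)\varrho_{g_l}(G_{l-1})$ for $g_l\notin O_K$. Note two things: first, this applies the curvature bound to the greedy elements, not, as your text suggests, to ``the marginals of the optimal elements'' — that direction of attack does not produce the needed inequality; second, the terms with $g_l\in O_K\cap G_i$ vanish on the left while $\varrho_{g_l}(G_{l-1})$ can be positive on the right, so the intersection of the greedy and optimal sets must be handled separately. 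That residual case is precisely the difficulty your ``real work'' remark gestures at, and it remains undone in the proposal.
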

\vspace{2mm}
The function $(1-e^{-\alpha})/\alpha$ is a nonincreasing function of $\alpha$, so $(1-e^{-\alpha})/\alpha\in[1-e^{-1},1]$ when $f$ is a nondecreasing submodular set function. Also it is easy to check $(1-e^{-\alpha})/{\alpha}\geq 1/(1+\alpha)$ for $\alpha\in[0,1]$, which implies that the bound $(1-e^{-\alpha})/{\alpha}$ is stronger than the bound $1/(1+\alpha)$ in Theorem \ref{Theorem2.1}.

\section{Main Results}\label{sc:III}

In this section, first we  define the $k$-batch greedy strategy and the corresponding curvatures that will be used for deriving the harmonic and exponential bounds. Then we derive the performance bounds of the $k$-batch greedy strategy in terms of $\alpha_k$ for general matroid constraints and uniform matroid constraints, respectively. Moreover, we  compare the performance bounds for different $k$-batch greedy strategies.
\subsection{Strategy Formulation and Curvatures}
When $(X,\mathcal{I})$ is a general matroid, assume that the cardinality $K$ of the the maximal set in $\mathcal{I}$ is such that $k$ divides $K$. The $k$-batch greedy strategy is as follows:

Step 1: Let $S^0=\emptyset$ and $t=0$.

Step 2: Select $J_{t+1}\subseteq X\setminus S^t$ for which $|J_{t+1}|=k$, $S^t\cup J_{t+1}\in\mathcal{I}$, and 
\begin{align*}
f(S^t\cup J_{t+1})=\max\limits_{J\subseteq X\setminus S^t\ \text{and}\ |J|=k }f(S^t\cup J),
\end{align*}
 then set $S^{t+1}=S^t\cup J_{t+1}$.

Step 3:  If $f(S^{t+1})-f(S^t)>0$, set $t=t+1$, repeat step~2; otherwise, stop.

When $(X,\mathcal{I})$ is a uniform matroid with rank $K$, without loss of generality, assume that $k$ divides $K$. Then the $k$-batch greedy strategy is as follows:

Step 1: Let $S^0=\emptyset$ and $t=0$.

Step 2: Select $J_{t+1}\subseteq X\setminus S^t$ for which $|J_{t+1}|=k$, and 
\begin{align*}
f(S^t\cup J_{t+1})=\max\limits_{J\subseteq X\setminus S^t\ \text{and}\ |J|=k }f(S^t\cup J),
\end{align*}
 then set $S^{t+1}=S^t\cup J_{t+1}$.

Step 3: If  $t+1<K/k$, set $t=t+1$ and repeat step~2; otherwise, stop. 

Similar to the definition of the total curvature $\alpha$ in \cite{conforti1984submodular}, we define the total curvature $\alpha_k$ for a given $k$ as
$$\alpha_k=\max\limits_{J\in \hat{X}}\left\{1-\frac{\varrho_J(X\setminus J)}{\varrho_J(\emptyset)}\right\}$$ where $\hat{X}=\{J\subseteq X: f(J)>0 \ \text{and}\ |J|=k\}$.

Consider a set $T\subseteq X$ and an ordered set $S=\bigcup_{i=1}^tJ_i\subseteq X$, where $J_i\subseteq X$ and $|J_i|=k$. 
We define $S^0=\emptyset$, $S^i=\bigcup_{l=1}^iJ_l$ for $1\leq i\leq t$, and  the curvature
 \[\bar{\alpha}_k=\max\limits_{i:J_i\subseteq S^*}\left\{\frac{\varrho_{J_i}(S^{i-1})-\varrho_{J_i}(S^{i-1}\cup T)}{\varrho_{J_i}(S^{i-1})}\right\}，\]
 where $S^*=\{J_i\subseteq S-T: |J_i|=k \ \text{and} \ \varrho_{J_i}(S^{i-1})>0\}.$ It is easy to check that $f(S)=\sum_{i=1}^t\varrho_{J_i}(S^{i-1})$ and $\bar{\alpha}_k\leq \alpha_k$. 

For a uniform matroid with rank $K$, we use $S_K=\bigcup_{i=1}^tJ_i$ to denote the $k$-batch greedy solution, where $J_i$ is the set selected by the $k$-batch greedy strategy at stage $i$. Assume that $O_K$ is the optimal solution to Problem 1.
We define the curvature $\hat{\alpha}_k$ with respect to the optimal solution  as 
\[\hat{\alpha}_k=\max\limits_{1\leq j \leq t}\left\{1-\frac{\varrho_{S^j}(O_K)}{\varrho_{S^j}(\emptyset)}\right\}.\]
It is easy to prove that $\hat{\alpha}_k\leq \alpha_k$ when $f$ is a nondecreasing submodular set function.

\subsection{Harmonic Bound and Exponential Bound in Terms of the Total Curvature}
 
 The following proposition will be applied to derive the performance bounds for both general matroid constraints and  uniform matroid constraints.
\begin{Proposition}
\label{Pro1}
If $f$ is a nondecreasing submodular set function on $X$, $S$ and $T$ are subsets of $X$, and $\{T_1,\ldots, T_r\}$ is a partition of $T\setminus S$, then 
\begin{equation}
\label{eqn:Prop1}
f(T\cup S)\leq f(S)+\sum\limits_{i:T_i\subseteq T\setminus S}\varrho_{T_i}(S).
\end{equation}
\end{Proposition}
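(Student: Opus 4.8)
The plan is to express the left-hand gain $f(T\cup S)-f(S)$ as a telescoping sum running along the blocks of the partition, and then to bound each term separately using the extended submodularity inequality~(\ref{eqn:submodularimplies}). First I would fix an ordering of the blocks and build the nested chain $U_0=S$ and $U_i=S\cup T_1\cup\cdots\cup T_i$ for $1\leq i\leq r$. Since $\{T_1,\ldots,T_r\}$ partitions $T\setminus S$, the top of the chain is $U_r=S\cup(T\setminus S)=S\cup T$, and consecutive sets differ exactly by one block, $U_i=U_{i-1}\cup T_i$. Telescoping then gives
\[
f(T\cup S)-f(S)=\sum_{i=1}^{r}\bigl(f(U_i)-f(U_{i-1})\bigr)=\sum_{i=1}^{r}\varrho_{T_i}(U_{i-1}),
\]
where the last equality just rewrites each increment $f(U_{i-1}\cup T_i)-f(U_{i-1})$ in the $\varrho$-notation.

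Next I would bound each summand. Because the blocks of a partition are pairwise disjoint and each is disjoint from $S$, the block $T_i$ is disjoint from $U_{i-1}=S\cup T_1\cup\cdots\cup T_{i-1}$, i.e.\ $T_i\subseteq X\setminus U_{i-1}$. This is exactly the hypothesis needed to apply the submodularity consequence~(\ref{eqn:submodularimplies}) with $A=S\subseteq B=U_{i-1}$ and added set $T_i$, yielding $\varrho_{T_i}(U_{i-1})\leq\varrho_{T_i}(S)$. Summing this inequality over $i$ and combining with the telescoping identity gives $f(T\cup S)-f(S)\leq\sum_{i=1}^{r}\varrho_{T_i}(S)$, which is the stated bound; note that every block satisfies $T_i\subseteq T\setminus S$, so the restricted index set $\{i:T_i\subseteq T\setminus S\}$ in~(\ref{eqn:Prop1}) is simply $\{1,\ldots,r\}$.

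The argument is essentially routine, so the only point requiring care is the verification of the disjointness condition $T_i\subseteq X\setminus U_{i-1}$ that licenses the use of~(\ref{eqn:submodularimplies}); this is where the fact that $\{T_1,\ldots,T_r\}$ is a \emph{partition} (rather than merely a cover) of $T\setminus S$ is used. One small subtlety worth flagging is that the inequality holds independently of the chosen ordering of the blocks, even though different orderings produce different chains $U_i$ and different individual increments $\varrho_{T_i}(U_{i-1})$; the final bound is insensitive to this choice because each term is controlled by $\varrho_{T_i}(S)$, which depends only on $T_i$ and $S$.
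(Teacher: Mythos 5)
Your proof is correct and follows essentially the same route as the paper's: telescope $f(T\cup S)-f(S)$ along the chain $S\subseteq S\cup T_1\subseteq\cdots\subseteq S\cup(T\setminus S)$ and bound each increment $\varrho_{T_i}(U_{i-1})\leq\varrho_{T_i}(S)$ via inequality~(\ref{eqn:submodularimplies}). The only difference is that you make explicit the disjointness check and the ordering-independence remark, which the paper leaves implicit.
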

\vspace{1mm}
\begin{proof}
By the assumption that $\{T_1,\ldots, T_r\}$ is a partition of $T\setminus S$ and inequality \ref{eqn:submodularimplies},  we have 
 \begin{align*}
 f(T\cup S)-f(S)&=f(S\cup \bigcup_{l=1}^r T_l)-f(S)\\
 &=\sum\limits_{j=1}^r \varrho_{T_j}(S\cup\bigcup_{l=1}^{j-1}T_l)\\
 &\leq \sum\limits_{j:T_j\subseteq T\setminus S}\varrho_{T_j}(S).
 \end{align*}
\end{proof}
 The following proposition will be applied to derive the performance bound for  general matroid constraints.
 \begin{Proposition}
 \label{Pro2}
 Assume that $f$ is a nondecreasing submodular set function on $X$ with $f(\emptyset)=0$. Given a set $T\subseteq X$, a partition $\{T_1,\ldots, T_r\}$  of $T\setminus S$, and an ordered set $S=\bigcup_{i=1}^tJ_i\subseteq X$ with $|J_i|=k$, we have 
 \begin{align}
 \label{ineq:prop2}
  f(T)\leq \bar{\alpha}_k\sum\limits_{i:J_i\subseteq S\setminus T}&\varrho_{J_i}(S^{i-1})+\sum\limits_{i:J_i\subseteq T\cap S}\varrho_{J_i}(S^{i-1})\nonumber\\
  &+\sum\limits_{i:T_i\subseteq T\setminus S}\varrho_{T_i}(S).
 \end{align}
 \end{Proposition}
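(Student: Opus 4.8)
The plan is to start from the exact telescoping identity obtained by building the ordered set $S$ on top of $T$, namely $f(T\cup S)=f(T)+\sum_{i=1}^t\varrho_{J_i}(T\cup S^{i-1})$, which holds because $S^0=\emptyset$ and $S=S^t$. Rearranging gives $f(T)=f(T\cup S)-\sum_{i=1}^t\varrho_{J_i}(T\cup S^{i-1})$. I would then apply Proposition~\ref{Pro1} with the given partition $\{T_1,\dots,T_r\}$ of $T\setminus S$ to replace $f(T\cup S)$ by its upper bound $f(S)+\sum_{i:T_i\subseteq T\setminus S}\varrho_{T_i}(S)$. Since the subtracted sum $\sum_{i=1}^t\varrho_{J_i}(T\cup S^{i-1})$ is nonnegative by monotonicity, substituting the upper bound preserves the inequality and yields $f(T)\le f(S)+\sum_{i:T_i\subseteq T\setminus S}\varrho_{T_i}(S)-\sum_{i=1}^t\varrho_{J_i}(T\cup S^{i-1})$.

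Next I would insert the second telescoping identity $f(S)=\sum_{i=1}^t\varrho_{J_i}(S^{i-1})$ (already noted in the text) so that the two sums over $i$ merge into $\sum_{i=1}^t\bigl[\varrho_{J_i}(S^{i-1})-\varrho_{J_i}(S^{i-1}\cup T)\bigr]$, using $T\cup S^{i-1}=S^{i-1}\cup T$. The crux is to split this sum according to whether a batch lies in $T\cap S$ or in $S\setminus T$. For $J_i\subseteq T\cap S$ we have $J_i\subseteq T\cup S^{i-1}$, so $\varrho_{J_i}(S^{i-1}\cup T)=0$ and the term equals $\varrho_{J_i}(S^{i-1})$, which reproduces exactly the second sum in the target. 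For $J_i\subseteq S\setminus T$, the definition of $\bar{\alpha}_k$ gives $\varrho_{J_i}(S^{i-1})-\varrho_{J_i}(S^{i-1}\cup T)\le\bar{\alpha}_k\,\varrho_{J_i}(S^{i-1})$ whenever $\varrho_{J_i}(S^{i-1})>0$; the degenerate case $\varrho_{J_i}(S^{i-1})=0$ also satisfies this bound, since the left side is then $-\varrho_{J_i}(S^{i-1}\cup T)\le 0$ by monotonicity while the right side is $0$. Summing over these indices bounds their total contribution by $\bar{\alpha}_k\sum_{i:J_i\subseteq S\setminus T}\varrho_{J_i}(S^{i-1})$, which is the first term in the target, completing the chain.

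The step I expect to be most delicate is the bookkeeping that each batch $J_i$ falls into exactly one of the two classes $J_i\subseteq T\cap S$ and $J_i\subseteq S\setminus T$, so that the merged sum $\sum_{i=1}^t$ is exhausted by the two index sets appearing on the right-hand side; this is the structural assumption on how the ordered batches sit relative to $T$ in the setting where the proposition is applied, and any batch straddling $T$ would leave an unaccounted nonnegative term. I would also take care to handle the zero-marginal batches correctly when invoking the $\bar{\alpha}_k$ bound, and to keep $T\cup S^{i-1}$ and $S^{i-1}\cup T$ interchangeable so that the curvature is applied consistently with its definition, whose maximum is taken precisely over batches in $S\setminus T$ with positive marginal value $\varrho_{J_i}(S^{i-1})$.
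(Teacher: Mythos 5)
Your proof is correct and follows essentially the same route as the paper's: the telescoping identity $f(T\cup S)-f(T)=\sum_{i=1}^t\varrho_{J_i}(T\cup S^{i-1})$, the bound on $f(T\cup S)$ from Proposition~\ref{Pro1}, the curvature bound $\varrho_{J_i}(T\cup S^{i-1})\geq(1-\bar{\alpha}_k)\varrho_{J_i}(S^{i-1})$ for batches $J_i\subseteq S\setminus T$, and the decomposition of $f(S)$ over the index classes $J_i\subseteq S\setminus T$ and $J_i\subseteq T\cap S$. The only differences are cosmetic---you solve for $f(T)$ first and merge the sums before splitting by class, whereas the paper combines the two inequalities at the end---and your explicit flagging of the structural assumption that no batch $J_i$ straddles $T$ (which both proofs need, since otherwise the identity for $f(S)$ over the two index classes fails) is a point the paper leaves implicit.
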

 \begin{proof}
By the definition of the curvature $\bar{\alpha}_k$, we have
 \begin{align*}
 f(T\cup S)-f(T)&=\sum\limits_{i=1}^t\varrho_{J_i}(T\cup S^{i-1})\\
 &=\sum\limits_{i:J_i\subseteq S\setminus T}\varrho_{J_i}(T\cup S^{i-1})\\
 &\geq (1-\bar{\alpha}_k)\sum\limits_{i:J_i\subseteq S\setminus T}\varrho_{J_i}( S^{i-1}).
 \end{align*}
 
 By Proposition \ref{Pro1}, we have 
 \[f(T\cup S)\leq f(S)+\sum\limits_{i:T_i\subseteq T\setminus S}\varrho_{T_i}(S).\]
 
 Combining the inequalities above and using the identity \[f(S)=\sum\limits_{i:J_i\subseteq S\setminus T}\varrho_{J_i}( S^{i-1})+\sum\limits_{i:J_i\subseteq T\cap S}\varrho_{J_i}(S^{i-1}),\] we  get the inequality (\ref{ineq:prop2}).
 \end{proof}

Recall that when $(X,\mathcal{I})$ is a general matroid, we assume that $k$ divides  the cardinality $K$ of the maximal set in $\mathcal{I}$. By the augmentation property of a general matroid, any greedy solution and optimal solution can be augmented to a set of length $K$, respectively. Let $S=\bigcup_{i=1}^tJ_i$ be the $k$-batch greedy solution, where $J_i$ is the set selected by the $k$-batch greedy strategy at the $i$th step for $1\leq i\leq t$. Let $O=\{o_1,\ldots, o_K\}$ be the optimal solution. We prove that the following lemma holds.

\begin{Lemma}
\label{lemma1}
The optimal solution $O=\{o_1,\ldots, o_K\}$ can be ordered as $O=\bigcup_{i=1}^tJ_i'$  such that $\varrho_{J_i'}(S^{i-1})\leq \varrho_{J_i}(S^{i-1})$, where ${J_1',\ldots,J_t'}$ is a partition of $O$ and $|J_i'|=k$ for $1\leq i\leq t$. Furthermore, if $J_i'\subseteq O\cap S$, then $J_i'=J_i$.
\end{Lemma}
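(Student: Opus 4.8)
The plan is to split the statement into its two ingredients: a purely matroid-theoretic construction of the ordering $O=\bigcup_{i=1}^tJ_i'$, and a one-line application of the greedy selection rule. Since $S$ and $O$ are both maximal sets of $\mathcal{I}$ (each augmented to cardinality $K=tk$, i.e.\ bases), the inequality $\varrho_{J_i'}(S^{i-1})\le\varrho_{J_i}(S^{i-1})$ will follow at once as soon as each $J_i'$ is a legal competitor at the $i$th greedy step, that is, once $J_i'\subseteq X\setminus S^{i-1}$, $|J_i'|=k$, and $S^{i-1}\cup J_i'\in\mathcal{I}$; then $J_i$ being the maximizer of $f(S^{i-1}\cup J)$ over all such $J$ gives $f(S^{i-1}\cup J_i')\le f(S^{i-1}\cup J_i)$ and hence the claim (if $S$ had to be augmented by zero-gain blocks to reach cardinality $K$, the maximal feasible gain is already $0$ at those steps, so the inequality holds there as an equality). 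Thus all the work is in producing a partition of $O$ into size-$k$ blocks that are \emph{prefix-independent}, $S^{i-1}\cup J_i'\in\mathcal{I}$, and that send each common element to its own greedy block.

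I would build the partition by induction on the number of blocks $t$, peeling off $J_1'$ and passing to the contraction of $(X,\mathcal{I})$ by $J_1$ (ground set $X\setminus J_1$, with $Y$ independent there iff $Y\cup J_1\in\mathcal{I}$). Concretely, I first choose $J_1'\subseteq O$ with $|J_1'|=k$, $J_1\cap O\subseteq J_1'$, and $(O\setminus J_1')\cup J_1\in\mathcal{I}$. Because $J_1\cap O\subseteq J_1'$, the set $O\setminus J_1'$ is disjoint from $J_1$ and has cardinality $(t-1)k$, so it is a basis of the contraction; and $S\setminus J_1=J_2\cup\cdots\cup J_t$ is a basis of the contraction partitioned into size-$k$ blocks. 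Applying the inductive hypothesis to these two bases yields $O\setminus J_1'=J_2'\cup\cdots\cup J_t'$ with $(J_2\cup\cdots\cup J_{i-1})\cup J_i'$ independent in the contraction, which is exactly $S^{i-1}\cup J_i'\in\mathcal{I}$ in the original matroid for $2\le i\le t$; the $i=1$ case is $J_1'\in\mathcal{I}$, immediate since $J_1'\subseteq O$. Carrying the common elements of each block into its own block at every stage makes the final clause automatic: if some $J_i'$ lies entirely in $O\cap S$ it contains no element of $O\setminus S$, forcing $J_i'=J_i\cap O=J_i$; it also guarantees $J_i'\cap S^{i-1}=\emptyset$, which is precisely what legitimizes invoking the greedy rule.

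The delicate point, and the step I expect to be the main obstacle, is the existence of the first block $J_1'$ with exactly $k$ elements while respecting the overlap $O\cap S$: a naive single exchange produces a block of the wrong size whenever $J_1\cap O\ne\emptyset$. I would handle it as follows. The set $(O\cap S)\cup(J_1\setminus O)$ is contained in the independent set $S$, hence independent, and together with $O\setminus S$ it spans the matroid (their union contains $O$); extend it to a basis $B^{*}$ using only elements of $O\setminus S$. Exactly $b:=k-|J_1\cap O|$ elements of $O\setminus S$ are then left unused; calling this set $A$ and setting $J_1':=(J_1\cap O)\cup A$ gives $|J_1'|=k$ and $(O\setminus J_1')\cup J_1=B^{*}$, a basis. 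This overlap bookkeeping, peeling off precisely $k$ elements, keeping $A\subseteq O\setminus S$ so that common elements of later blocks are never disturbed, and verifying that the contraction preserves the common-element matching, is the part that requires care; everything else reduces to the standard basis-extension (augmentation) property of matroids.
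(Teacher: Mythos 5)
Your proof is correct, but it takes a genuinely different route from the paper's. The paper proceeds by \emph{backward} induction on $i=t,t-1,\ldots,1$: having chosen $J_t',\ldots,J_{i+1}'$, it sets $O^i=O\setminus\bigcup_{l>i}J_l'$, compares $S^{i-1}$ (size $(i-1)k$) with $O^i$ (size $ik$), and applies the augmentation axiom $k$ times to this pair to extract $J_i'\subseteq O^i\setminus S^{i-1}$ of size $k$ with $S^{i-1}\cup J_i'\in\mathcal{I}$; the greedy rule then gives $\varrho_{J_i'}(S^{i-1})\leq\varrho_{J_i}(S^{i-1})$, and the final clause is handled by the rule ``if $J_i\subseteq O^i$, set $J_i'=J_i$.'' You instead peel blocks off the \emph{front}, recursing in the contraction $M/J_1$, and you maintain the stronger invariant $J_i\cap O\subseteq J_i'$ by building $J_1'$ from $J_1\cap O$ plus leftover elements of $O\setminus S$ via basis extension inside $O\cup J_1$. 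Each approach buys something: the paper's argument is shorter and uses only the augmentation axiom (no contraction machinery, no overlap bookkeeping). Your argument is more robust on exactly the point where the paper is sketchy: the paper's rule for the ``furthermore'' clause only covers the case where an entire greedy block survives in $O^i$, and by itself it does not exclude an augmentation choice that assembles a mixed block $J_i'\subseteq O\cap S$ out of elements from several different greedy blocks, which would falsify the clause; your invariant that every common element is carried into its own block rules this out structurally, and it also yields $J_i'\cap S^{i-1}=\emptyset$ for free. You additionally make explicit why the inequality persists for the zero-gain blocks appended to bring the greedy set up to cardinality $K$ (via monotonicity and submodularity pushing the zero maximal gain forward), a point the paper passes over silently when it invokes ``by the $k$-batch greedy strategy.''
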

\begin{proof}
Similar to the proof in \cite{nemhauser19781}, we will prove this lemma by backward induction on $i$ for $i=t, t-1,\ldots, 1$. Assume that $J_l'$ satisfies the inequality $\varrho_{J_l'}(S^{l-1})\leq \varrho_{J_l}(S^{l-1})$ for $l>i$, and let $O^i=O\setminus \bigcup_{l>i} J_l'$. Consider the sets $S^{i-1}$ and $O^i$. By definition, $|S^{i-1}|=(i-1) k$ and $|O^i|=i k$.  Using the augmentation property of a general matroid, we have that there exists one element $o_{i_1}\in O^i\setminus S^{i-1}$ such that $S^{i-1}\cup\{o_{i_1}\}\in\mathcal{I}$. Next consider $S^{i-1}\cup\{o_{i_1}\}$ and $O^i$. Using the augmentation property again, there exists one element $o_{i_2}\in O^i\setminus S^{i-1}\setminus\{o_{i_1}\}$ such that $S^{i-1}\cup\{o_{i_1}, o_{i_2}\}\in\mathcal{I}$. Similar to the process above, using the augmentation property $(k-2)$ more times, finally we have that there exists $J_i'=\{o_{i_1},\ldots,o_{i_k}\}\subseteq O^i\setminus S^{i-1}$ such that $S^{i-1}\cup J_i'\in \mathcal{I}$. By the $k$-batch greedy strategy, we have that $\varrho_{J_i'}(S^{i-1})\leq \varrho_{J_i}(S^{i-1})$. Furthermore, if $J_i\subseteq O^i$, we can set $J_i'=J_i$. 
\end{proof}

The following two theorems  present our performance bounds in terms of the total curvature $\alpha_k$ for the $k$-batch greedy strategy under a general matroid constraint and a uniform matroid, respectively.
\begin{Theorem}
\label{Theorem3.3}
Assume that  $f$ is a nondecreasing submodular set function with $f(\emptyset)=0$, the pair $(X,\mathcal{I})$ is a general matroid, and $k$ divides the cardinality $K$ of the maximal set in $\mathcal{I}$. Then the $k$-batch greedy strategy $S=\bigcup_{i=1}^tJ_i$ satisfies 
\begin{equation}
\label{ineq:generalbound}
f(S)\geq \frac{1}{1+\alpha_k}f(O).
\end{equation}
\end{Theorem}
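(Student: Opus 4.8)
The plan is to apply Proposition~\ref{Pro2} with $T=O$, feeding it the reordering of the optimal solution supplied by Lemma~\ref{lemma1}, and then to argue that the three resulting sums collapse into $(1+\alpha_k)f(S)$. First I would invoke Lemma~\ref{lemma1} to write $O=\bigcup_{i=1}^tJ_i'$ with $|J_i'|=k$, $\varrho_{J_i'}(S^{i-1})\le\varrho_{J_i}(S^{i-1})$ for every $i$, and $J_i'=J_i$ whenever $J_i'\subseteq O\cap S$. This reordering does double duty: the pieces $J_i'$ lying in $O\cap S$ are exactly (full) greedy batches, while the remaining pieces partition $O\setminus S$, which is precisely the partition $\{T_1,\dots,T_r\}$ that Proposition~\ref{Pro2} requires.

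Applying Proposition~\ref{Pro2} with $T=O$ and this partition would give
\begin{align*}
f(O)&\le \bar{\alpha}_k\sum_{i:J_i\subseteq S\setminus O}\varrho_{J_i}(S^{i-1})+\sum_{i:J_i\subseteq O\cap S}\varrho_{J_i}(S^{i-1})\\
&\quad+\sum_{i:J_i'\subseteq O\setminus S}\varrho_{J_i'}(S).
\end{align*}
Next I would bound the last sum. For each $J_i'\subseteq O\setminus S$, submodularity (inequality~(\ref{eqn:submodularimplies})) gives $\varrho_{J_i'}(S)\le\varrho_{J_i'}(S^{i-1})$ since $S^{i-1}\subseteq S$, and Lemma~\ref{lemma1} gives $\varrho_{J_i'}(S^{i-1})\le\varrho_{J_i}(S^{i-1})$; hence the last sum is at most $\sum_{i:J_i'\subseteq O\setminus S}\varrho_{J_i}(S^{i-1})$.

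Writing $P=\sum_{i:J_i\subseteq S\setminus O}\varrho_{J_i}(S^{i-1})$ and $Q=\sum_{i:J_i\subseteq O\cap S}\varrho_{J_i}(S^{i-1})$, the crucial bookkeeping observation is that, under the correspondence from Lemma~\ref{lemma1}, the index set $\{i:J_i'\subseteq O\setminus S\}$ is contained in (indeed coincides with) $\{i:J_i\subseteq S\setminus O\}$, its complement being the indices with $J_i'=J_i\subseteq O\cap S$. Since every greedy increment $\varrho_{J_i}(S^{i-1})$ is nonnegative ($f$ nondecreasing), the bounded last sum is then at most $P$, so the displayed inequality becomes $f(O)\le(1+\bar{\alpha}_k)P+Q$. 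Because $\{i:J_i\subseteq S\setminus O\}$ and $\{i:J_i\subseteq O\cap S\}$ partition $\{1,\dots,t\}$, the telescoping identity gives $P+Q=\sum_{i=1}^t\varrho_{J_i}(S^{i-1})=f(S)$, whence $f(O)\le(1+\bar{\alpha}_k)P+Q\le(1+\bar{\alpha}_k)(P+Q)=(1+\bar{\alpha}_k)f(S)$. Finally, using $\bar{\alpha}_k\le\alpha_k$ (noted when $\bar{\alpha}_k$ was defined) yields $f(O)\le(1+\alpha_k)f(S)$, which rearranges to the claimed bound~(\ref{ineq:generalbound}).

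I expect the main obstacle to be exactly this index-bookkeeping step: one must verify that Lemma~\ref{lemma1} truly aligns the optimal pieces with the greedy batches so that each greedy batch $J_i$ lies \emph{entirely} in $O\cap S$ or entirely in $S\setminus O$ (no straddling), which is what makes the three sums of Proposition~\ref{Pro2} exhaustive, lets $P+Q=f(S)$, and lets the last sum be absorbed into $P$. Once that partition is pinned down, the submodularity estimate, the Lemma~\ref{lemma1} estimate, and the collapse to $f(S)$ are routine, and the only inequality doing essential work beyond the accounting is $\bar{\alpha}_k\le\alpha_k$.
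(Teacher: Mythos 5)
Your proposal follows the paper's proof essentially step for step: Lemma~\ref{lemma1} to reorder $O$, Proposition~\ref{Pro2} applied with $T=O$, the chain $\varrho_{J_i'}(S)\le\varrho_{J_i'}(S^{i-1})\le\varrho_{J_i}(S^{i-1})$, and finally $\bar{\alpha}_k\le\alpha_k$. The one place you go beyond the paper is the bookkeeping claim, and that claim is false: the index set $\{i:J_i'\subseteq O\setminus S\}$ need not be contained in $\{i:J_i\subseteq S\setminus O\}$, because a greedy batch $J_i$ can \emph{straddle} $O$ (contain some elements of $O$ and some elements outside $O$). Concrete failure: let $f(A)=|A|$, $X=\{a,b,c,d,e,f\}$, $k=2$, and $(X,\mathcal{I})$ the uniform matroid of rank $4$. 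Every $2$-batch has gain $2$ at every stage, so greedy (with adversarial tie-breaking) may pick $J_1=\{a,b\}$, $J_2=\{c,d\}$, while $O=\{a,c,e,f\}$ is optimal; the construction in Lemma~\ref{lemma1} may return $J_1'=\{a,c\}$, $J_2'=\{e,f\}$. Then $J_2'\subseteq O\setminus S$ but $J_2\not\subseteq S\setminus O$, and in fact $P=Q=0$, so your final chain would read $f(O)\le(1+\bar{\alpha}_k)P+Q=0$ while $f(O)=4$. Lemma~\ref{lemma1} only gives the implication ``$J_i'\subseteq O\cap S\Rightarrow J_i'=J_i$''; it does not rule out straddling, so the alignment you correctly flag as ``the main obstacle'' is not actually supplied by anything in the paper.

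For what it is worth, the paper's own write-up needs only a weaker fact at the analogous point, namely disjointness of $\{i:J_i'\subseteq O\setminus S\}$ from $\{i:J_i\subseteq O\cap S\}$, which does follow from Lemma~\ref{lemma1}'s construction (if $J_i\subseteq O\cap S$ then $J_i$ avoids every later $J_l'$, hence $J_i\subseteq O^i$ and $J_i'=J_i\subseteq S$); with that, $Q+\sum_{i:J_i'\subseteq O\setminus S}\varrho_{J_i}(S^{i-1})\le f(S)$ and $\bar{\alpha}_kP\le\alpha_kf(S)$, which is how the paper reaches $f(O)\le(1+\alpha_k)f(S)$. However, the same example shows the paper is not in the clear either: Proposition~\ref{Pro2} rests on the identity $f(S)=\sum_{i:J_i\subseteq S\setminus T}\varrho_{J_i}(S^{i-1})+\sum_{i:J_i\subseteq T\cap S}\varrho_{J_i}(S^{i-1})$, which also presumes that no batch straddles $T$ (in the example it would read $4=0$), and the collection $\{J_i':J_i'\subseteq O\setminus S\}$ must genuinely partition $O\setminus S$ for the application of Proposition~\ref{Pro2} to be legitimate. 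So: your approach is the paper's approach, and your instinct about where the difficulty sits is exactly right, but your resolution of it is wrong --- and a fully rigorous proof would have to repair this straddling issue, which neither your argument nor the paper's does.
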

\vspace{2mm}
\begin{proof}
By Lemma~\ref{lemma1}, we have that the optimal solution $O$ can be ordered as $O=\bigcup_{i=1}^tJ_i'$ such that  $\varrho_{J_i'}(S^{i-1})\leq \varrho_{J_i}(S^{i-1})$, where $\{J_l'\}_{l=1}^t$ is a partition of $O$ and $|J_l'|=k$ for $1\leq l\leq t$.

By Proposition \ref{Pro2}, we have 
\begin{align*}
f(O)\leq \bar{\alpha}_k\sum\limits_{i:J_i\subseteq S\setminus O}&\varrho_{J_i}(S^{i-1})+\sum\limits_{i:J_i\subseteq O\cap S}\varrho_{J_i}(S^{i-1})\\
  &+\sum\limits_{i:J_i'\subseteq O\setminus S}\varrho_{J_i'}(S).
\end{align*}

By inequality (\ref{eqn:submodularimplies}), we have 
\[\varrho_{J_i'}(S)\leq \varrho_{J_i'}(S^{i-1})\leq \varrho_{J_i}(S^{i-1}).\]
Then 
\begin{align*}
f(O)&\leq \bar{\alpha}_k\sum\limits_{i:J_i\subseteq S\setminus O}\varrho_{J_i}(S^{i-1})+\sum\limits_{i:J_i\subseteq O\cap S}\varrho_{J_i}(S^{i-1})\\
&\quad\quad\quad+\sum\limits_{i:J_i'\subseteq O\setminus S}\varrho_{J_i}(S^{i-1})\\
&\leq {\alpha}_kf(S)+f(S),
\end{align*}
which implies that $f(S)\geq \frac{1}{1+\alpha_k}f(O)$.
\end{proof}

\emph{Remarks}
\begin{itemize}
\item The harmonic bound $1/(1+\alpha_k)$ for the $k$-batch greedy strategy holds for \emph{any} matroid. However, for uniform matroids, a better bound is given  in Theorem \ref{Theorem3.4}.
\item The function $g(x)=1/(1+x)$ is nonincreasing in $x$ on the interval $[0,1]$. 
\end{itemize}

\begin{Theorem}
\label{Theorem3.4}
Assume that $f$ is a nondecreasing submodular set function with $f(\emptyset)=0$, the pair $(X,\mathcal{I})$ is a uniform matroid with rank $K$,  and $k$ divides $K$. Then the $k$-batch greedy solution $S_K=\bigcup_{i=1}^tJ_i$ satisfies 
\begin{align}
\label{k-batchuniformbound}
f(S_K)&\geq \frac{1}{{\alpha}_k}\left(1-(1-\frac{{\alpha}_k}{t})^t\right)f(O_K)\nonumber\\
&\geq\frac{1}{{\alpha}_k}(1-e^{-\alpha_k})f(O_K).
\end{align}

\end{Theorem}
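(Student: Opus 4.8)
The plan is to mimic the Nemhauser--Wolsey / Conforti--Cornu\'ejols argument for uniform matroids, but at the granularity of $k$-element batches, with the curvature $\hat{\alpha}_k$ carrying the improvement over the trivial constant. I would derive a single per-step inequality linking $f(O_K)$, $f(S^j)$, and the greedy gain $\varrho_{J_{j+1}}(S^j)=f(S^{j+1})-f(S^j)$, turn it into a linear recurrence for $f(S^j)$, solve that recurrence, and finally pass from $\hat{\alpha}_k$ to $\alpha_k$ and bound the resulting expression by its exponential form.

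First I would establish the per-step inequality. Fix $j$ with $0\le j\le t-1$ and write $O_K\setminus S^j$ as a union of at most $t$ disjoint blocks of size at most $k$ (padding any short block to size $k$ inside $X\setminus S^j$, which is possible since $|X\setminus S^j|\ge(t-j)k\ge k$). Proposition~\ref{Pro1} with $T=O_K$ and $S=S^j$ gives $f(O_K\cup S^j)\le f(S^j)+\sum_i\varrho_{O_i}(S^j)$. Since the $k$-batch greedy rule selects the batch of largest marginal value at step $j+1$, each block satisfies $\varrho_{O_i}(S^j)\le\varrho_{J_{j+1}}(S^j)$ (comparing after the padding, using monotonicity), so $\sum_i\varrho_{O_i}(S^j)\le t\,\varrho_{J_{j+1}}(S^j)$. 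Next I would bring in the curvature: by the definition of $\hat{\alpha}_k$, $\varrho_{S^j}(O_K)\ge(1-\hat{\alpha}_k)\varrho_{S^j}(\emptyset)=(1-\hat{\alpha}_k)f(S^j)$, i.e.\ $f(O_K)\le f(O_K\cup S^j)-(1-\hat{\alpha}_k)f(S^j)$. Combining the two bounds and using $f(\emptyset)=0$ yields $f(O_K)\le\hat{\alpha}_k f(S^j)+t\bigl(f(S^{j+1})-f(S^j)\bigr)$, which rearranges to
\begin{equation*}
f(S^{j+1})\ge\tfrac{1}{t}f(O_K)+\Bigl(1-\tfrac{\hat{\alpha}_k}{t}\Bigr)f(S^j).
\end{equation*}

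With $f(S^0)=0$, unrolling this linear recurrence over $j=0,\dots,t-1$ and summing the resulting geometric series (ratio $1-\hat{\alpha}_k/t$) gives $f(S_K)\ge\frac{1}{\hat{\alpha}_k}\bigl(1-(1-\hat{\alpha}_k/t)^t\bigr)f(O_K)$. To finish, I would replace $\hat{\alpha}_k$ by $\alpha_k$: setting $v=1-x/t$, the map $x\mapsto\frac{1}{x}\bigl(1-(1-x/t)^t\bigr)=\frac{1}{t}\bigl(1+v+\cdots+v^{t-1}\bigr)$ is nonincreasing in $x$ on $(0,t]$, and since $\hat{\alpha}_k\le\alpha_k$ (established in the preliminaries) this weakens the bound precisely to the stated $\frac{1}{\alpha_k}\bigl(1-(1-\alpha_k/t)^t\bigr)f(O_K)$. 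The second inequality is then immediate from $1-y\le e^{-y}$ with $y=\alpha_k/t$, which gives $(1-\alpha_k/t)^t\le e^{-\alpha_k}$.

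The main obstacle is the per-step counting step, not the algebra: one must argue carefully that $O_K\setminus S^j$ splits into no more than $t$ batches each dominated by the greedy gain $\varrho_{J_{j+1}}(S^j)$, correctly handling the overlap $O_K\cap S^j$ and any short final block via monotone padding inside $X\setminus S^j$. Once that inequality is in hand, the recurrence, its closed-form solution, the monotonicity of $h(x)=\frac{1}{x}(1-(1-x/t)^t)$, and the exponential estimate are all routine.
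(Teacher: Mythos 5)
Your proposal matches the paper's own proof essentially step for step: Proposition~\ref{Pro1} applied with $T=O_K$ and $S=S^j$, domination of each $k$-element block of $O_K\setminus S^j$ by the greedy gain $\varrho_{J_{j+1}}(S^j)$, the curvature inequality $f(O_K)+(1-\hat{\alpha}_k)f(S^j)\leq f(O_K\cup S^j)$, the resulting recurrence $f(S^{j+1})\geq \frac{1}{t}f(O_K)+(1-\frac{\hat{\alpha}_k}{t})f(S^j)$, its geometric-series solution, and the final passage from $\hat{\alpha}_k$ to $\alpha_k$ via $\hat{\alpha}_k\leq\alpha_k$ and monotonicity. Your treatment is in fact slightly more careful than the paper's on two points it leaves implicit --- the padding of short blocks in the partition of $O_K\setminus S^j$ and the explicit monotonicity argument for $x\mapsto\frac{1}{x}\left(1-(1-x/t)^t\right)$ --- so the proof is correct and complete.
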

\vspace{2mm}
\begin{proof}
Taking $T$ to be the optimal solution $O_K$ and $S$ to be the set $S^j$ generated by the $k$-batch greedy strategy over the first $j$ stages in Proposition~\ref{Pro1} results in 
\[f(O_K\cup S^j)\leq f(S^j)+\sum\limits_{i:T_i\subseteq O_K\setminus S^j}\varrho_{T_i}(S^j),\] 
where $|T_i|=k$.

By the $k$-batch greedy strategy, we have that for $T_i\subseteq O_K\setminus S^j$, $$\varrho_{T_i}(S^j)\leq \varrho_{J_{j+1}}(S^j),$$
which implies that 
\begin{equation}
\label{ineq:relation}
f(O_K\cup S^j)\leq f(S^j)+t\varrho_{J_{j+1}}(S^j).
\end{equation}
By the definition of $\hat{\alpha}_k$, we have 

\[f(O_K)+(1-\hat{\alpha}_k)f(S^j)\leq f(O_K\cup S^j).\]
Combining the inequality above and (\ref{ineq:relation}), we have 
\begin{equation}
\label{ineq:iteration}
f(S^{j+1})\geq \frac{1}{t}f(O_K)+(1-\frac{\hat{\alpha}_k}{t})f(S^j).
\end{equation}
Taking $j=0,1,\ldots, t-1$ in (\ref{ineq:iteration}), we have 
\begin{align*}
f(S_K)=f(S^t)&\geq \frac{1}{t}f(O_K)+(1-\frac{\hat{\alpha}_k}{t})f(S^{t-1})\\
&\geq \frac{1}{t}f(O_K)\sum\limits_{l=0}^{t-1}(1-\frac{\hat{\alpha}_k}{t})\\
&=\frac{1}{\hat{\alpha}_k}\left(1-(1-\frac{\hat{\alpha}_k}{t})^t\right)f(O_K),
\end{align*}
which implies
\begin{align*}
f(S_K)&\geq \frac{1}{{\alpha}_k}\left(1-(1-\frac{{\alpha}_k}{t})^t\right)f(O_K)\\
&\geq\frac{1}{{\alpha}_k}(1-e^{-\alpha_k})f(O_K).
\end{align*}
\end{proof}

\textit{Remarks}

\begin{itemize}
\item When $\alpha_k=1$, the bound $(1-(1-\alpha_k/t)^t)/\alpha_k$ becomes $1-(1-1/t)^t$, which is the bound  in \cite{nemhauser19781} when $p=0$.
\item Let $h(x,y)=\left(1-(1-{x}/{y}\right)^y)/{x}$. The function $h(x,y)$ is nonincreasing in $x$ on the interval $[0,1]$ for any positive integer $y$.
Also $h(x,y)$ is nonincreasing in $y$ when $x$ is a constant on the interval $[0,1]$.
\item The function $l(x)=(1-e^{-x})/{x}$ is nonincreasing in $x$, so $(1-e^{-\alpha_k})/{\alpha_k}\in[1-e^{-1}, 1]$.
\item The monotoneiety of $g(x)$ and $h(x,y)$ implies that the $k$-batch greedy strategy has  better harmonic and exponential bounds than the $1$-batch greedy strategy if $\alpha_k\leq \alpha$ .
\end{itemize}

The following theorem establishes that indeed $\alpha_k\leq \alpha$.

\begin{Theorem}
\label{Theorem3.5}
Assume that $f$ is a nondecreasing submodular set function satisfying $f(\emptyset)=0$. Then $\alpha_k\leq \alpha.$ 
\end{Theorem}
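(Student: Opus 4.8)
The plan is to reduce the claim to a single-element statement and then exploit the definition of $\alpha$ term by term. Since the maximum defining $\alpha_k$ is taken over $J\in\hat{X}$, it suffices to show that for every such $J$ with $|J|=k$ and $f(J)>0$ we have $1-\varrho_J(X\setminus J)/\varrho_J(\emptyset)\leq \alpha$. Because $f(\emptyset)=0$ gives $\varrho_J(\emptyset)=f(J)>0$, and $\varrho_J(X\setminus J)=f(X)-f(X\setminus J)$, this is equivalent to the inequality $f(X)-f(X\setminus J)\geq (1-\alpha)f(J)$, which I would establish directly and then maximize over $J$.

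Write $J=\{j_1,\ldots,j_k\}$ with any fixed ordering. First I would bound the right-hand side from above: telescoping $f(J)=\sum_{l=1}^k\varrho_{j_l}(\{j_1,\ldots,j_{l-1}\})$ and applying submodularity, in the form $\varrho_{j_l}(\{j_1,\ldots,j_{l-1}\})\leq \varrho_{j_l}(\emptyset)=f(\{j_l\})$, yields $f(J)\leq \sum_{l=1}^k f(\{j_l\})$. Next I would bound the left-hand side from below using the same ordering: setting $X_l=(X\setminus J)\cup\{j_1,\ldots,j_l\}$ so that $X_0=X\setminus J$ and $X_k=X$, the telescoping sum gives $f(X)-f(X\setminus J)=\sum_{l=1}^k\varrho_{j_l}(X_{l-1})$. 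Since $X_{l-1}\subseteq X\setminus\{j_l\}$, submodularity (in the form $\varrho_{j_l}(A)\geq\varrho_{j_l}(B)$ for $A\subseteq B$) gives $\varrho_{j_l}(X_{l-1})\geq \varrho_{j_l}(X\setminus\{j_l\})$, hence $f(X)-f(X\setminus J)\geq \sum_{l=1}^k\varrho_{j_l}(X\setminus\{j_l\})$.

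The final ingredient is the definition of the scalar curvature $\alpha$ applied elementwise. For each $j_l\in X^*$ the definition gives $\varrho_{j_l}(X\setminus\{j_l\})\geq (1-\alpha)\varrho_{j_l}(\emptyset)=(1-\alpha)f(\{j_l\})$; for $j_l\notin X^*$ we have $f(\{j_l\})=0$, so the same inequality holds trivially since $\varrho_{j_l}(X\setminus\{j_l\})\geq 0$ by monotonicity. Summing over $l$ and combining with $f(J)\leq\sum_l f(\{j_l\})$ and $1-\alpha\geq 0$ (valid because $\alpha\in[0,1]$ for nondecreasing submodular $f$) chains into $f(X)-f(X\setminus J)\geq (1-\alpha)\sum_l f(\{j_l\})\geq (1-\alpha)f(J)$, which is exactly what is needed. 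Dividing by $f(J)>0$ and taking the maximum over $J\in\hat{X}$ then delivers $\alpha_k\leq\alpha$.

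I expect the main obstacle to be bookkeeping rather than any deep idea: one must keep the two telescoping decompositions consistent with a single fixed ordering of $J$ and be careful that the two submodularity inequalities point in opposite directions — the decomposition of $f(J)$ uses the upper bound $\varrho_{j_l}(\cdot)\leq f(\{j_l\})$, whereas the decomposition of $f(X)-f(X\setminus J)$ uses the lower bound $\varrho_{j_l}(X_{l-1})\geq \varrho_{j_l}(X\setminus\{j_l\})$. The other point requiring care is the treatment of elements with $f(\{j_l\})=0$, where the elementwise curvature bound must be justified by monotonicity rather than by the definition of $\alpha$, since such $j_l$ lie outside $X^*$.
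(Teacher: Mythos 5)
Your proof is correct and takes essentially the same approach as the paper's: both telescope $\varrho_J(X\setminus J)$ and $\varrho_J(\emptyset)$ over a fixed ordering $j_1,\ldots,j_k$ of $J$, apply submodularity termwise to replace these increments by $\varrho_{j_l}(X\setminus\{j_l\})$ and $\varrho_{j_l}(\emptyset)$, and then invoke the definition of $\alpha$ elementwise. The only differences are cosmetic: you argue additively via $f(X)-f(X\setminus J)\geq(1-\alpha)f(J)$ rather than through the paper's ratio-of-sums comparison, and you explicitly treat the case $j_l\notin X^*$ (using monotonicity), a detail the paper's proof glosses over.
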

\begin{proof}
By the definition of $\alpha_k$, we have 
\begin{align*}
\alpha_k&=\max_{J_k\subseteq \hat{X}}\left\{1-\frac{\varrho_{J_k}({X\setminus J_k})}{\varrho_{J_k}(\emptyset)}\right\}\\
&=1-\min_{J_k\subseteq \hat{X}}\left\{\frac{\sum\limits_{l=1}^k\varrho_{j_l}(X\setminus J_l)}{\sum\limits_{l=1}^k\varrho_{j_l}(J_{l-1})}\right\},
\end{align*}
where  $J_l=\{j_1,\ldots, j_l\}$ for $1\leq l\leq k$.

By the assumption that $f$ is a submodular set function, we have, for $1\leq l\leq k$,
$$\varrho_{j_l}(X\setminus J_l)\geq \varrho_{j_l}(X\setminus\{j_l\})
\ \text{and}\ \varrho_{j_l}(J_{l-1})\leq \varrho_{j_l}(\emptyset),$$
which imply that 
\[\frac{\sum\limits_{l=1}^k\varrho_{j_l}(X\setminus J_l)}{\sum\limits_{l=1}^k\varrho_{j_l}(J_{l-1})}\geq\frac{\sum\limits_{l=1}^k\varrho_{j_l}(X\setminus\{j_l\})}{\sum\limits_{l=1}^k\varrho_{j_l}(\emptyset)}. \]
Then, we have
\begin{equation}
\label{Inequality2}
\alpha_k\leq 1-\min_{{j_1,\ldots,j_k}\in \hat{X}}\left\{\frac{\sum\limits_{l=1}^k\varrho_{j_l}(X\setminus\{j_l\})}{\sum\limits_{l=1}^k\varrho_{j_l}(\emptyset)}\right\}.
\end{equation}

By the definition of $\alpha$, we have for $1\leq l\leq k$,
\[\varrho_{j_l}(X\setminus\{j_l\})\geq (1-\alpha)\varrho_{j_l}(\emptyset).\]

Combining the inequality above and (\ref{Inequality2}), we have 
\[\alpha_k\leq 1-(1-\alpha)=\alpha.\]

\end{proof}

The following theorem states that if $k_1$ divides $k$, then  the total curvature $\alpha_{k}$ for the $k$-batch greedy is smaller than the total curvature $\alpha_{k_1}$ for the $k_1$-batch greedy strategy.

\begin{Theorem}
\label{Theorem3.6}
Assume that $f$ is a submodular set function satisfying $f(\emptyset)=0$. Then $\alpha_{k}\leq \alpha_{k_1}$ when $k_1$ divides $k$.
\end{Theorem}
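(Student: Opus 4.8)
The plan is to reduce the inequality $\alpha_k \le \alpha_{k_1}$ to a single per-set estimate and then exploit the divisibility $k_1 \mid k$ through a block decomposition that generalizes the singleton decomposition used in the proof of Theorem~\ref{Theorem3.5} (which is exactly the case $k_1=1$, since $\alpha_1=\alpha$). Because $\alpha_k = \max_{J\in\hat{X}}\{1 - \varrho_J(X\setminus J)/\varrho_J(\emptyset)\}$, it suffices to show, for every $J\subseteq X$ with $|J|=k$ and $f(J)>0$, the bound
\[
\frac{\varrho_J(X\setminus J)}{\varrho_J(\emptyset)} \ge 1 - \alpha_{k_1},
\]
since taking complements and maximizing over such $J$ then yields $\alpha_k \le \alpha_{k_1}$.

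Write $k = m k_1$ and partition an arbitrary such $J$ into $m$ blocks $B_1,\dots,B_m$ of size $k_1$, with partial unions $J^{(p)} = \bigcup_{q\le p} B_q$ (so $J^{(0)}=\emptyset$ and $J^{(m)}=J$). First I would telescope both numerator and denominator over the blocks, obtaining
\[
\varrho_J(X\setminus J) = \sum_{p=1}^m \varrho_{B_p}(X\setminus J^{(p)}),
\qquad
\varrho_J(\emptyset) = f(J) = \sum_{p=1}^m \varrho_{B_p}(J^{(p-1)}).
\]
Then, applying submodularity termwise via inequality~(\ref{eqn:submodularimplies}) (using $X\setminus J^{(p)}\subseteq X\setminus B_p$ for the numerator and $\emptyset\subseteq J^{(p-1)}$ for the denominator), I would bound each numerator term below by $\varrho_{B_p}(X\setminus B_p)$ and each denominator term above by $\varrho_{B_p}(\emptyset)=f(B_p)$. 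Invoking the definition of $\alpha_{k_1}$ on every block of positive value gives $\varrho_{B_p}(X\setminus B_p) \ge (1-\alpha_{k_1})\,f(B_p)$; summing over $p$ and chaining these estimates through $\sum_p f(B_p)\ge f(J)=\varrho_J(\emptyset)$ delivers the displayed per-set bound.

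The step I expect to be the main obstacle is keeping the directions of the inequalities consistent while fusing them: the numerator must be bounded below and the denominator above, and both estimates must combine so that the ratio comes out at least $1-\alpha_{k_1}$. This requires (i) that the denominator $\varrho_J(\emptyset)=f(J)$ is strictly positive, which holds by the assumption $J\in\hat{X}$; (ii) a careful treatment of blocks with $f(B_p)=0$, which are excluded from $\hat{X}$ at size $k_1$ but for which the inequality $\varrho_{B_p}(X\setminus B_p)\ge (1-\alpha_{k_1})f(B_p)=0$ still holds by monotonicity; and (iii) the fact that $\alpha_{k_1}\in[0,1]$, so $1-\alpha_{k_1}\ge 0$, which is exactly what allows the final replacement $\sum_p f(B_p)\ge f(J)$ to be passed through the factor $1-\alpha_{k_1}$ without reversing the inequality. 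Once these sign and positivity issues are resolved, the chain closes and the theorem follows.
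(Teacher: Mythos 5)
Your proposal is correct and follows essentially the same route as the paper's own proof: the paper also writes $k=k_1k_2$, telescopes $\varrho_{J_k}(X\setminus J_k)$ and $\varrho_{J_k}(\emptyset)$ over the $k_1$-sized blocks $J_{lk_1}\setminus J_{(l-1)k_1}$, bounds each numerator term below by $\varrho_{B}(X\setminus B)$ and each denominator term above by $\varrho_{B}(\emptyset)$ via inequality~(\ref{eqn:submodularimplies}), and closes by invoking the definition of $\alpha_{k_1}$ blockwise to get $\alpha_k\leq 1-(1-\alpha_{k_1})=\alpha_{k_1}$. Your explicit handling of blocks with $f(B_p)=0$ and of the sign of $1-\alpha_{k_1}$ addresses edge cases the paper passes over silently, but the underlying argument is the same.
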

\begin{proof}
Suppose that $k=k_1k_2$ ($k_1$ and $k_2$ are integers).
Write
\begin{align*}
\varrho_{J_k}&(X\setminus J_k)=\sum\limits_{l=1}^{k_2}\varrho_{J_{lk_1}\setminus J_{(l-1)k_1}}(X\setminus J_{l {k_1}})
\end{align*}
and 
$$\varrho_{J_k}(\emptyset)=
\sum\limits_{l=1}^{k_2}\varrho_{J_{l k_1}\setminus J_{(l-1) k_1}}(J_{(l-1) k_1}).$$

By inequality (\ref{eqn:submodularimplies}), we have for $1\leq l\leq k_2$, 
\begin{align*}
&\varrho_{J_{l k_1}\setminus J_{(l-1) k_1}}(X\setminus J_{l k_1})\geq\\
&\varrho_{J_{l k_1}\setminus J_{(l-1) k_1}}(X\setminus (J_{l k_1}\setminus J_{(l-1) k_1}))
\end{align*}
and\[\varrho_{J_{l k_1}\setminus J_{(l-1) k_1}}(J_{(l-1) k_1})
 \leq \varrho_{J_{l k_1}\setminus J_{(l-1) k_1}}(\emptyset).\]

From the inequalities above and by the definition of $\alpha_k$, we have 

\begin{align*}
\alpha_k&=\max_{J_k\subseteq \hat{X}}\left\{1-\frac{\varrho_{J_k}({X\setminus J_k})}{\varrho_{J_k}(\emptyset)}\right\}\\
&=1-\min_{J_k\subseteq \hat{X}}\left\{\frac{\sum\limits_{l=1}^{k_2}\varrho_{J_{l k_1}\setminus J_{(l-1) k_1}}(X\setminus J_{l k_1})}{\sum\limits_{l=1}^{k_2}\varrho_{J_{l k_1}\setminus J_{(l-1) k_1}}(J_{(l-1) k_1})}\right\}\\
&\leq 1-\min_{J_k\subseteq\hat{X}}\\
&\tiny{\left\{\frac{\sum\limits_{l=1}^{k_2}\varrho_{J_{l k_1}\setminus J_{(l-1) k_1}}(X\setminus (J_{l k_1}\setminus J_{(l-1) k_1}))}{\sum\limits_{l=1}^{k_2}\varrho_{J_{l k_1}\setminus J_{(l-1) k_1}}(\emptyset)}
\right\}}.
\end{align*}

By the definition of $\alpha_{k_1}$, we have for $1\leq l\leq k_2$, 
\begin{align*}
\varrho_{J_{l k_1}\setminus J_{(l-1) k_1}}(X\setminus (J_{l k_1}\setminus J_{(l-1) k_1}))\\
\geq (1-\alpha_{k_1})\varrho_{J_{l k_1}\setminus J_{(l-1) k_1}}(\emptyset).
\end{align*}

Using the inequalities above, we have 
$$\alpha_k\leq 1-(1-\alpha_{k_1})=\alpha_{k_1}.$$
\end{proof}

One would also expect the following generalization of Theorem~\ref{Theorem3.6} to hold: if $k_1\leq k$, then $\alpha_k\leq\alpha_{k_1}$, leading to better bounds for the $k$-batch greedy strategy than for the $k_1$-batch greedy strategy. We have a proof for this claim using Lemmas~1.1 and 1.2 in \cite{vondrak2010}, but the proof is more involved and is omitted for the sake of brevity. We will illustrate the validity of this claim in Section~IV.

\section{Application: Task Assignment}

In this section, we consider a task assignment problem to demonstrate that the $k$-batch greedy strategy has better performance than the $k_1$-batch greedy strategy when $f$ is a nondecreasing submodular set function.

As a canonical example for problem (\ref{eqn:1}), we  consider the task assignment problem  posed in \cite{streeter2008online}, which was also analyzed in \cite{ZhC13J} and \cite{YJ2015}.  In this problem, there are $n$ subtasks and a  set $X$ of $N$ agents $a_j$ $(j=1,\ldots, N).$ At each stage, a subtask $i$ is assigned to an agent $a_j$, who accomplishes the task with probability $p_i(a_j)$. Let $X_i({a_1,a_2,\ldots, a_k})$ denote the random variable that describes whether or not subtask $i$ has been accomplished after performing the sequence of actions ${a_1,a_2,\ldots, a_k}$ over $k$ stages. Then $\frac{1}{n}\sum_{i=1}^n	X_i(a_1,a_2,\ldots,a_k)$ is the fraction of subtasks accomplished after $k$ stages by employing agents $a_1,a_2,\ldots, a_k$. The objective function $f$ for this problem is the expected value of this fraction, which can be written as

$$f(\{a_1,\ldots,a_k\})=\frac{1}{n}\sum_{i=1}^n\left(1-\prod_{j=1}^k(1-p_i(a_j))\right).$$

Assume that $p_i(a)>0$ for any $a\in X$. Then it is easy to check that $f$ is nondecreasing. Therefore, when $\mathcal{I}=\{S\subseteq X: |S|\leq K\}$, the solution to this problem should be of length $K$.  Also, it is easy to check that  $f$ has the diminishing-return property.

For convenience, we only consider the special case $n=1$; our analysis can be generalized to any $n\geq 2$. For $n=1$, we have 
$$f(\{a_1,\ldots,a_k\})=1-\prod_{j=1}^k(1-p(a_j))$$
where $p(\cdot)=p_1(\cdot)$.

Assume that $0<p(a_1)\leq p(a_2)\leq \cdots\leq p(a_N)\leq 1$. Then by the definition of the total curvature $\alpha_k$, we have 
\begin{align*}
\alpha_k&=\max\limits_{j_1,\ldots,j_k\in {X}}\left\{1-\frac{f(X)-f(X\setminus\{j_1,\ldots,j_k\})}{f(\{j_1,\ldots, j_k\})-f(\emptyset)}\right\}\\
&=1-\prod_{l=k+1}^K(1-p(a_l)).
\end{align*}

From the form of $\alpha_k$, we have $\alpha_k\in[0,1]$, which is consistent with our conclusion that when $f$ is a nondecreasing submodular set function, then $\alpha_k \in[0,1]$. Also we have $\alpha_k\leq \alpha_{k_1}$  when $k_1$ divides $k$. Even if $k_1$ does not divide $k$, we still have $\alpha_k\leq \alpha_{k_1}$ in this example, which is consistent with our claim.
\section{Conclusion}
In this paper, we derived performance bounds for the $k$-batch greedy strategy, $k\geq 1$, in terms of a total curvature $\alpha_k$.
We showed that when the objective function is nondecreasing and submodular, the $k$-batch greedy strategy satisfies a harmonic bound  $1/(1+\alpha_k)$ for a general matroid  and an exponential bound $(1-e^{-\alpha_k})/\alpha_k$ for a uniform matroid, where $k$ divides the cardinality of the maximal set in the general matroid and the rank of the uniform matroid, respectively. We proved that, for a submodular objective function, $\alpha_k\leq \alpha_{k_1}$ when $k_1$ divides $k$. Consequently, for a nondecreasing submodular objective function,  the $k$-batch greedy strategy has better performance bounds than the $k_1$-batch greedy strategy in such a case. This is true even when $k_1\leq k$ does not divide $k$, but it follows a more involved proof that we have left out. We demonstrated our results by considering a task-assignment problem, which also corroborated our claim that if $k_1\leq k$, then $\alpha_{k}\leq \alpha_{k_1}$ even if $k_1$ does not divide $k$.

\bibliographystyle{IEEEbib}

\end{document}